\documentclass[12pt,reqno]{amsart}
\setlength{\textheight}{23cm}
\setlength{\textwidth}{16cm}
\setlength{\topmargin}{-0.8cm}
\hoffset=-1.4cm 

\usepackage[skip=5pt plus1pt, indent=20pt]{parskip}

\usepackage{palatino}
\usepackage{mathpazo}
\usepackage{amsmath,amssymb,amsxtra,color,calligra,mathrsfs,tcolorbox,stmaryrd}

\usepackage{xcolor} 
\colorlet{mdtRed}{red!50!black}
\colorlet{dblue}{blue!50!black}
\usepackage[colorlinks,pagebackref=true]{hyperref}
\hypersetup{linkcolor=dblue,citecolor=dblue,filecolor=dullmagenta,urlcolor=mdtRed}
\renewcommand*{\backref}[1]{}
\renewcommand*{\backrefalt}[4]{[{%
		\ifcase #1 Not cited.%
		\or $\uparrow$~#2.%
		\else $\uparrow$~#2.%
		\fi%
	}]}
\usepackage[all]{xy}
\usepackage{tikz,tikz-cd,tkz-graph,enumerate}
\usetikzlibrary{matrix,arrows,decorations.pathmorphing}

\usepackage[us,12hr]{datetime}
\usepackage{comment}


\DeclareMathOperator{\Id}{{\rm Id}}
\DeclareMathOperator{\Hom}{{\rm Hom}}

\DeclareMathOperator{\End}{{\rm End}}

\DeclareMathOperator{\Lie}{{\rm Lie}}
\DeclareMathOperator{\At}{{\rm At}}

\DeclareMathOperator{\ad}{{\rm ad}}

\DeclareMathOperator{\GL}{{\rm GL}}

\newcommand{\mf}[1]{\mathfrak{#1}}
\newcommand{\mc}[1]{\mathcal{#1}}

\newcommand{\bb}[1]{\mathbb{#1}}

\renewcommand{\ker}{\mathrm{Ker}}

\newcommand{\doi}[1]{\href{https://doi.org/#1}{doi:#1}}
\numberwithin{equation}{subsection}

\newtheorem{theorem}[equation]{Theorem}

\newtheorem{proposition}[equation]{Proposition}

\theoremstyle{definition}
\newtheorem{definition}[equation]{Definition}

\theoremstyle{theorem}

\makeatletter
\newcommand\fnsymb[1]{\textsuperscript{\@fnsymbol{#1}}}
\newcommand\fnletter[1]{\lowercase{\textsuperscript{\@alph{#1}}}}

\makeatother

\usepackage{marvosym} 
\renewcommand{\email}[2][1]{\thanks{\textit{Email address}#1: \href{mailto:#2}{#2}}}

\renewcommand{\address}[2][1]{\thanks{\textit{Address}#1: #2}} 

\begin{document}

\baselineskip=15.5pt 

\title[On Lie algebroid connections]{Lie Algebroid Connections on Principal Bundles}

\author[S. Ghosh]{Samit Ghosh\fnsymb{1}}

\author[A. Paul]{Arjun Paul\fnsymb{2}}

\email[\fnsymb{1}]{sg23rs005@iiserkol.ac.in}

\email[\fnsymb{2}]{arjun.paul@iiserkol.ac.in}

\address[]{Department of Mathematics and Statistics, 
	Indian Institute of Science Education and Research Kolkata, 
	Mohanpur - 741 246, Nadia, West Bengal, India.
}

\thanks{Corresponding author: Arjun Paul}

\subjclass[2010]{14J60, 53C07, 32L10}

\keywords{Connection; Lie algebroid; principal $G$--bundle.} 

\begin{abstract}
	Let $X$ be an irreducible smooth complex projective variety. 
	Let $G$ be a linear algebraic group over $\mathbb{C}$. 
	We define the notion of Lie algebroid valued connection on 
	holomorphic principal $G$--bundles on $X$, and study their 
	basic properties under extension and reduction of structure group. 
	Finally we investigate criterions for existence of a Lie algebroid 
	connection on principal $G$--bundles over smooth complex projective curves. 
\end{abstract}

\baselineskip=15.5pt 

\date{Last updated on \today\,at \currenttime\,(IST)}

\maketitle 

\tableofcontents

\section{Introduction}\label{sec:introduction}

A famous theorem of A. Weil \cite{Weil-1938} says that a holomorphic vector bundle $E$ 
on a compact connected Riemann surface $X$ admits a holomorphic connection if and only if 
each indecomposible holomorphic direct summand of $E$ has degree zero. 
In \cite{Atiyah-1957} M. Atiyah generalizes the notion of holomorphic connections 
in the context of holomorphic principal $G$--bundles on compact K\"ahler manifolds, 
and gives an algebro-geometric proof and Weils' theorem for holomorphic vector bundles 
on compact connected Riemann surfaces. 
In \cite{Azad-Biswas-2002} Azad and Biswas generalize Weils theorem for 
holomorphic principal $G$--bundles on compact connected Riemann surfaces. 
It is clear from these results that not every holomorphic vector bundles 
and principal $G$--bundles can admit holomorphic connections. 
This naturally leads one to consider the notion of meromorphic connections. 
One of the simplest kind of meromorphic connections is the notion of 
logarithmic connection, which are treated for holomorphic vector bundles 
and holomorphic principal $G$--bundles, for example in 
\cite{Biswas-Dan-Paul-2018}, \cite{Biswas-Dan-Paul-Saha-2017}, \cite{Gurjar-Paul-2020} etc. 

In the context of complex algebraic and differential geometry, the classical notion 
of holomorphic as-well-as singular connections has natural generalization by 
replacing tangent bundle with a Lie algebroid leading to the notion of 
{\it Lie algebroid connections}, which is more convenient to work in some setups 
like Poisson geometry, foliation theory etc. The notion of Lie algebroid connections 
also generalize the notion of holomorphic and logarithmic connections. 
It is an interesting problem to study Lie algebroid connections on holomorphic 
vector bundles and principal bundles. 

Let $X$ be a connected compact Riemann surface. Fix a holomorphic Lie algebroid 
$\mc V = (V, [\cdot\,,\,\cdot], \varphi)$ on $X$ with $V$ a stable vector bundle. 
In \cite{Biswas-Kumar-Singh-2024} the authors shows that every holomorphic vector bundle 
on $X$ admits a $\mc V$--valued Lie algebroid connection generalizing a result 
\cite[Corollary 3.17]{Alfaya-Oliveire-2024} of Alfaya and Oliveire. 
In this paper we generalize the notion of $\mc V$--valued Lie algebroid connections 
in the context of principal $G$--bundles 
(see Definition \ref{def:Lie-alg-conn-on-G-Bundles}), 
study their properties under extension and reduction of the structure group 
of the principal bundles (see \S\,\ref{sec:basic-properties}), 
and prove the following. 

\begin{theorem}
	Let $X$ be an irreducible smooth complex projective curve of genus $g \geq 2$. 
	Fix a Lie algebroid $\mc V = (V, [\cdot\,,\,\cdot], \varphi)$ on $X$ 
	such that $V$ is a stable vector bundle on $X$ with the slope $\mu(V) \neq 2-2g$. 
	Let $G$ be a reductive linear algebraic group over $\bb C$. 
	Then any holomorphic principal $G$--bundle $E_G$ on $X$ admits a 
	$\mc{V}$--valued Lie algebroid connection. 
\end{theorem}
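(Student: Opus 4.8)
The plan is to reduce the statement to the already known vector bundle case via a faithful representation, and then to descend the resulting connection along the reduction of structure group, exploiting reductivity of $G$ to split off the complementary directions. First I would reformulate the problem. Associated to $E_G$ is its Atiyah exact sequence
\begin{equation*}
0 \longrightarrow \ad(E_G) \longrightarrow \At(E_G) \xrightarrow{\ a_{E_G}\ } TX \longrightarrow 0 ,
\end{equation*}
and, following Definition \ref{def:Lie-alg-conn-on-G-Bundles}, a $\mc V$-valued connection on $E_G$ is an $\mc O_X$-linear map $D \colon V \to \At(E_G)$ lifting the anchor, i.e. $a_{E_G}\circ D = \varphi$; equivalently it is a splitting of the sequence pulled back along $\varphi$. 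The trivial case is worth recording: if $\varphi = 0$ one may take $D = 0$, so the content lies entirely in the case $\varphi \neq 0$, which, since $V$ is stable and $TX$ is a line bundle of degree $2-2g$, forces $\mu(V) < 2-2g$.

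Next I would pass to a faithful representation. As $G$ is a linear algebraic group, choose a faithful representation $\rho \colon G \hookrightarrow \GL(W)$ and set $E_W := E_G(\rho)$, the associated holomorphic vector bundle. Since $V$ is stable with $\mu(V) \neq 2-2g$, the vector bundle result of Biswas, Kumar and Singh \cite{Biswas-Kumar-Singh-2024} applies to $E_W$, producing a $\mc V$-connection on $E_W$, that is, an $\mc O_X$-linear lift $s \colon V \to \At(E_W)$ of $\varphi$ along $a_{E_W}$.

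The key step is to project this connection back onto $E_G$. Because $G$ is reductive and we work over $\bb C$, complete reducibility furnishes a $G$-module decomposition $\End(W) = \mf g \oplus \mf m$, where $\mf g = d\rho(\Lie G)$ and $\mf m$ is a $G$-stable complement. Passing to associated bundles gives $\End(E_W) = \ad(E_G) \oplus M$ with $M := E_G(\mf m)$, and, via functoriality of the Atiyah bundle under $\rho$ established in \S\,\ref{sec:basic-properties}, a splitting $\At(E_W) = \At(E_G) \oplus M$ compatible with the inclusion $\At(E_G) \hookrightarrow \At(E_W)$. Let $\pi \colon \At(E_W) \twoheadrightarrow \At(E_G)$ be the projection along $M$. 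Since $M \subseteq \End(E_W) = \ker a_{E_W}$, this projection is anchor-compatible, $a_{E_G}\circ \pi = a_{E_W}$. Hence $D := \pi \circ s \colon V \to \At(E_G)$ is $\mc O_X$-linear and satisfies $a_{E_G}\circ D = a_{E_W}\circ s = \varphi$, so $D$ is a $\mc V$-connection on $E_G$, as required.

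The hard part will be constructing the $G$-equivariant complement $\mf m$ and checking that the induced projection $\pi$ remains anchor-compatible, so that $\pi\circ s$ again lifts $\varphi$; this is precisely where reductivity of $G$ is indispensable, and everything else is formal once the vector bundle input is in hand. Equivalently, one can phrase the argument cohomologically: the obstruction to a $\mc V$-connection on $E_G$ is the image $\varphi^*(\mathrm{at}(E_G)) \in H^1(X,\ad(E_G)\otimes V^*)$ of the Atiyah class under $\varphi^* \colon K_X \to V^*$, and functoriality of the Atiyah class together with the split injection $\ad(E_G)\hookrightarrow \End(E_W)$ exhibits this obstruction as a direct summand of $\varphi^*(\mathrm{at}(E_W))$, which vanishes by the vector bundle case. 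In this approach the slope hypothesis $\mu(V) \neq 2-2g$ enters only through the vector bundle input, where it excludes the degenerate tangent-algebroid situation $V \cong TX$ in which genuine holomorphic connections would be forced.
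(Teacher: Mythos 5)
Your proof is correct, but it takes a genuinely different route from the paper's. The paper never invokes the vector bundle theorem of \cite{Biswas-Kumar-Singh-2024} in its proof: after disposing of $\mu(V) > 2-2g$ via $\varphi = 0$ exactly as you do, it handles $\mu(V) < 2-2g$ directly through the Harder--Narasimhan canonical reduction $E_P \subseteq E_G$ of \cite{HN-reduction-Anchouche-Azad-Biswas-2002} --- first establishing the semistable case by Serre duality and the slope computation $\mu(\ad(E_G)^*\otimes V\otimes K_X) = 2g-2+\mu(V) < 0$ (Proposition \ref{prop:existence_semistable-G-bundles-on-curve}), then descending a connection from the semistable Levi bundle $E_L$ to $E_P$ via the vanishing $H^1(X, E_P(\mf n)\otimes V^*) = 0$, which follows from $\mu_{\min}(E_P(\mf n)) \geq 0$ (Proposition \ref{prop:connection-on-E_P-from-E_L}), and finally extending from $E_P$ to $E_G$ (Proposition \ref{prop:Lie-algebroid-connection-under-extn-of-str-grp}). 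Your key step, by contrast, is in substance the paper's Proposition \ref{prop:Key-reduction-for-reductive-subgroup} specialized to a faithful representation $f = \rho : G \hookrightarrow H = \GL(W)$: the paper proves exactly your complete-reducibility splitting $\mf h = df(\mf g)\oplus W$ and the induced retraction, yet --- somewhat curiously --- does not use that proposition in Theorem \ref{thm:main-thm}. Your details all check out: $\End(E_W) = \ad(E_G)\oplus M$ with $M = E_G(\mf m)$, the induced decomposition $\At(E_W) \cong \At(E_G)\oplus M$ is anchor-compatible because $M \subseteq \ker(a_{E_W})$, so $\pi\circ s$ lifts $\varphi$; cohomologically, the split injection $\ad(E_G)\hookrightarrow \End(E_W)$ makes $H^1(f)$ split injective, so $\Phi_{\mc V}(E_W) = 0$ forces $\Phi_{\mc V}(E_G) = 0$. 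The trade-off: your argument is shorter and exhibits the principal-bundle theorem as a formal consequence of the vector bundle case, but it is only as strong as the quoted input --- its validity rests on \cite{Biswas-Kumar-Singh-2024} covering precisely the hypotheses $V$ stable, $\mu(V)\neq 2-2g$, $g \geq 2$ (which matches how the paper itself quotes that result in the introduction). The paper's proof is self-contained, independently re-proves the vector bundle theorem as the case $G = \GL_n$, and its intermediate vanishing statements for the canonical reduction have independent interest; that is presumably why the authors chose the longer route.
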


This generalize the main result of \cite{Biswas-Kumar-Singh-2024} to 
the case of holomorphic principal $G$--bundles on $X$.

\section{Lie Algebroid Connections}

\subsection{The case of vector bundles}
Let $X$ be an irreducible smooth projective variety over $\mathbb{C}$. 
Let $\mc O_X$ be the sheaf of holomorphic functions on $X$, and let 
$TX$ be the holomorphic tangent bundle of $X$. 

\begin{definition}\cite[\S\,1.1]{Alfaya-Oliveire-2024}
	A {\it Lie algebroid} on $X$ is a triple $\mc V := (V, [\cdot, \cdot], \varphi)$, where 
	\begin{enumerate}[(i)]
		\item $V$ is a holomorphic vector bundle on $X$, 
		
		\item $[\cdot, \cdot] : V\times V \to V$ is a $\bb C$--bilinear skew-symmetric 
		morphism of sheaves such that for all locally defined sections 
		$u, v, w$ of $V$, the following Jacobi identity holds: 
		$$[u, [v, w]]+[v, [w, u]]+[w, [u, v]] = 0;$$
		
		\item $\varphi : V \to TX$ is a vector bundle homomorphism 
		satisfying the following properties: for all locally defined sections $s, t$ of $V$ and 
		locally defined section $f$ of $\mc O_X$, we have 
		\begin{enumerate}[(a)]
			\item {\it Compatibility of Lie algebra structures}: 
			$\varphi([s, t]) = [\varphi(s), \varphi(t)]$, and 
			\item {\it Leibniz rule}: $[fs, t] = f[s, t] - \varphi(t)(f)s$.
		\end{enumerate}
	\end{enumerate}
	The homomorphism $\varphi$ is called the {\it anchor map} of the Lie algebroid $\mc V$. 
	The {\it degree} and the {\it rank} of $\mc V$ is defined to be the degree and the rank, 
	respectively, of the underlying vector bundle $V$ of $\mc V$. 
\end{definition}

The dual of the anchor map gives a holomorphic vector bundle homomorphism 
$$\varphi^* : \Omega_X^1 \longrightarrow V^*,$$
where $\Omega_X^1$ is the holomorphic cotangent bundle of $X$. 
Fix a Lie algebroid $\mc V := (V, [\cdot, \cdot], \varphi)$ on $X$. 
Let $\mc E$ be a holomorphic vector bundle on $X$. 

\begin{definition}\label{def:Lie-alg-conn-on-VB}
	A {\it $\mc V$--valued Lie algebroid connection} on  
	$\mc E$ on $X$ is a $\bb C$--linear homomorphism of sheaves 
	$$D : \mc E \longrightarrow \mc E\otimes V^*$$ 
	satisfying the {\it $\varphi^*$--twisted Leibniz rule}: 
	\begin{equation}\label{eqn:phi^*-twisted-Leibniz-rule}
		D(f\cdot s) = fD(s)+s\otimes\varphi^*(df), 
	\end{equation}
	for all locally defined section $s$ of $\mc E$ and for all locally defined 
	section $f$ of $\mc O_X$. 
\end{definition}

\subsection{The case of principal $G$--bundles}
Now we extend the definition of Lie algebroid connection to the case of principal bundles 
following a construction given in \cite{Biswas-Paul-2017}. 
Let $G$ be a linear algebraic group over $\mathbb{C}$ with the Lie algebra $\mf{g} := \Lie(G)$. 
Let $p : E_G \to X$ be a holomorphic principal $G$--bundle on $X$. The adjoint representation 
$${\rm ad} : G \longrightarrow \GL(\mf{g})$$ 
of $G$ on its Lie algebra $\mf{g}$ gives rise to a vector bundle 
$$\ad(E_G) := E_G\times^{\rm ad}\mf{g}$$ 
on $X$, called the {\it adjoint vector bundle} of $E_G$. 
If $E$ is the frame bundle of a vector bundle $\mc E$ of rank $n$ on $X$, 
then we have $\ad(E) \cong \End(\mc E)$, the endomorphism bundle of $\mc E$. 
The surjective submersion $p : E_G \to X$ gives rise to an exact sequence of 
vector bundles 
\begin{equation}
	\xymatrix{
		0 \ar[r] & \ad(E_G) \ar[r] & \At(E_G) \ar[r]^-{d'p} & TX \ar[r] & 0 
	}
\end{equation}
called the Atiyah exact sequence of $E_G$. 
A connection on the principal $G$--bundle $E_G$ on $X$ is an $\mc O_X$--linear 
homomorphism $\nabla : TX \to \At(E_G)$ such that $d'p\circ\nabla = \Id_{TX}$.

Fix a Lie algebroid $\mc V = (V, [\cdot\,,\,\cdot], \varphi)$ on $X$, 
and consider the map 
$$\rho : \At(E_G)\oplus V \longrightarrow TX$$ 
defined by 
\begin{equation}
	\rho(\xi, v) = d'p(\xi)-\varphi(v), 
\end{equation}
for all locally defined section $\xi$ of $\At(E_G)$ and locally defined section 
$v$ of $V$. Note that $\rho$ is a vector bundle homomorphism and 
\begin{equation}
	\At_{\varphi}(E_G) := \rho^{-1}(0) 
\end{equation}
is a vector bundle on $X$. 
The restriction of the second projection map gives rise 
to a vector bundle homomorphism 
\begin{equation}\label{eqn:rho-tilde-map}
	\widetilde{\rho} : \At_{\varphi}(E_G) \longrightarrow V
\end{equation} 
with kernel 
$$\ker(\widetilde{\rho}) = \ad(E_G).$$ 
Thus we have the following short exact sequence 
\begin{equation}\label{eqn:Atiyah-exact-seq-for-(V,phi)-valued-connection}
	0 \longrightarrow \ad(E_G) \longrightarrow \At_{\varphi}(E_G) 
	\stackrel{\widetilde{\rho}}{\longrightarrow} V \longrightarrow 0 
\end{equation}
of vector bundles on $X$, which fits into the following 
commutative diagram 
\begin{equation}
	\begin{gathered}
		\xymatrix{
		0 \ar[r] & \ad(E_G) \ar[r] \ar@{=}[d] & \At_{\varphi}(E_G) \ar[r]^-{\widetilde{\rho}} \ar[d] 
		& V \ar[r] \ar[d]^-\varphi & 0 \\ 
		0 \ar[r] & \ad(E_G) \ar[r] & \At(E_G) \ar[r]^-{d'p} & TX \ar[r] & 0 
		}
	\end{gathered}
\end{equation}
of vector bundle homomorphisms with all rows exact. 

\begin{definition}\label{def:Lie-alg-conn-on-G-Bundles}
	A {\it $\mc V$--valued Lie algebroid connection} on $E_G$ is a 
	vector bundle homomorphism 
	$$\nabla : V \longrightarrow \At_{\varphi}(E_G)$$
	such that $\widetilde{\rho}\circ\nabla = \Id_V$, 
	where $\widetilde{\rho}$ is defined in \eqref{eqn:rho-tilde-map}. 
\end{definition}

The short exact sequence \eqref{eqn:Atiyah-exact-seq-for-(V,phi)-valued-connection} 
defines a cohomology class 
\begin{equation}\label{defn:V-valued-Atiyah-class-of-E_G}
	\Phi_{\mc V}(E_G) \in H^1(X, \ad(E_G)\otimes V^*), 
\end{equation}
such that the exact sequence \eqref{eqn:Atiyah-exact-seq-for-(V,phi)-valued-connection} 
splits holomorphically if and only if $\Phi_{\mc V}(E_G) = 0$. 

\begin{proposition}\label{prop:E_G-admits-Lie-alg-conn-iff-Atiyah-cls-vanishes}
	A holomorphic principal $G$--bundle $E_G$ on $X$ admits a $\mc V$--valued 
	holomorphic Lie algebroid connection if and only if $\Phi_{\mc V}(E_G) = 0$. 
	We call $\Phi_{\mc V}(E_G)$ the {\it $\mc V$--valued Atiyah class of $E_G$}. 
\end{proposition}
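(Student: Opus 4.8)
The plan is to recognize that a $\mc V$--valued Lie algebroid connection on $E_G$, as defined in Definition \ref{def:Lie-alg-conn-on-G-Bundles}, is precisely a holomorphic splitting of the short exact sequence \eqref{eqn:Atiyah-exact-seq-for-(V,phi)-valued-connection}: a homomorphism $\nabla : V \to \At_{\varphi}(E_G)$ with $\widetilde{\rho}\circ\nabla = \Id_V$ is exactly a holomorphic section of the surjection $\widetilde{\rho}$. Thus the statement reduces to the standard fact that an extension of holomorphic vector bundles splits holomorphically if and only if its extension class vanishes, together with the identification of that class with $\Phi_{\mc V}(E_G)$. I would make this precise by passing to the long exact cohomology sequence associated to $\sHom(V, -)$.

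First I would apply the functor $\sHom(V, -)$ to \eqref{eqn:Atiyah-exact-seq-for-(V,phi)-valued-connection}. Since $V$ is locally free, this functor is exact, so we obtain a short exact sequence of sheaves
$$0 \longrightarrow \sHom(V, \ad(E_G)) \longrightarrow \sHom(V, \At_{\varphi}(E_G)) \stackrel{\widetilde{\rho}_*}{\longrightarrow} \sHom(V, V) \longrightarrow 0,$$
where $\widetilde{\rho}_*$ is post-composition with $\widetilde{\rho}$. Passing to the associated long exact sequence in sheaf cohomology yields a fragment
$$H^0(X, \sHom(V, \At_{\varphi}(E_G))) \stackrel{\widetilde{\rho}_*}{\longrightarrow} H^0(X, \sHom(V, V)) \stackrel{\delta}{\longrightarrow} H^1(X, \sHom(V, \ad(E_G))).$$
Using the canonical isomorphism $\sHom(V, \ad(E_G)) \cong \ad(E_G)\otimes V^*$ (again because $V$ is locally free), the target of the connecting homomorphism $\delta$ is exactly the group $H^1(X, \ad(E_G)\otimes V^*)$ in which $\Phi_{\mc V}(E_G)$ lives. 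A global section of $\sHom(V, \At_{\varphi}(E_G))$ is precisely a vector bundle homomorphism $\nabla : V \to \At_{\varphi}(E_G)$, and it maps under $\widetilde{\rho}_*$ to $\widetilde{\rho}\circ\nabla$; hence a $\mc V$--valued Lie algebroid connection is exactly a preimage of $\Id_V$ under $\widetilde{\rho}_*$. By exactness, such a preimage exists if and only if $\delta(\Id_V) = 0$.

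The final and only genuinely substantive step is to identify $\delta(\Id_V)$ with the extension class $\Phi_{\mc V}(E_G)$ of \eqref{eqn:Atiyah-exact-seq-for-(V,phi)-valued-connection}. This is the standard principle that the image of the identity endomorphism under the connecting map of the $\sHom(V,-)$--sequence recovers the class classifying the extension in $\mathrm{Ext}^1(V, \ad(E_G)) \cong H^1(X, \ad(E_G)\otimes V^*)$; I would verify it by unwinding the \v{C}ech description of $\delta$ against the transition cocycle of the extension, which is essentially the construction of $\Phi_{\mc V}(E_G)$ already recorded before the proposition. Granting this identification, we conclude that $E_G$ admits a $\mc V$--valued holomorphic Lie algebroid connection if and only if $\delta(\Id_V) = \Phi_{\mc V}(E_G) = 0$, as claimed. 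I expect this matching of the connecting homomorphism with the extension class to be the only point requiring care, the remainder being formal homological algebra.
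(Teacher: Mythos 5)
Your proposal is correct and follows essentially the same route as the paper: the paper offers no separate proof, having already defined $\Phi_{\mc V}(E_G)$ as the extension class of \eqref{eqn:Atiyah-exact-seq-for-(V,phi)-valued-connection} and noted that the sequence splits holomorphically if and only if this class vanishes, with a connection being exactly a splitting. Your explicit verification via the long exact sequence for $\sHom(V,-)$ and the identification $\delta(\Id_V) = \Phi_{\mc V}(E_G)$ is just the standard homological algebra the paper takes for granted.
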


Let $\nabla : V \to \At_{\varphi}(E_G)$ be a $\mc V$--valued Lie algebroid connection 
on $E_G$ over $X$. For all locally defined holomorphic sections $s$ and $t$ of $V$, let 
$$\kappa_{\nabla}(s, t) := [\nabla(s), \nabla(t)] - \nabla([s, t]).$$ 
Since the homomorphism $\widetilde{\rho} : \At_{\varphi}(E_G) \to V$ respects the Lie algebra 
structures on the sheaves of sections, $\kappa_{\nabla}(s, t)$ defines a holomorphic local 
section of $\ad(E_G)$. Thus we obtain a section 
$$\kappa_{\nabla} \in H^0(X, \ad(E_G)\otimes \bigwedge\nolimits^2V^*),$$
called the {\it curvature} of the $\mc V$--valued Lie algebroid connection 
$\nabla$ on $E_G$. The section $\kappa_{\nabla}$ can be considered as an obstruction 
for $\nabla$ to be a Lie algebra homomorphism. 

\begin{definition}
	A $\mc V$--valued Lie algebroid connection $\nabla$ on a 
	principal $G$--bundle $E_G$ on $X$ is said to be {\it flat} if $\kappa_{\nabla} = 0$. 
\end{definition}

\begin{proposition}
	If ${\rm rank}(\mc V) = 1$, any $\mc V$--valued 
	Lie algebroid connection on $E_G$ is flat. 
\end{proposition}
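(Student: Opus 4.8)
The plan is to observe that the curvature $\kappa_{\nabla}$ lives, by its very construction, in the global sections of $\ad(E_G)\otimes\bigwedge^2 V^*$, and that this target sheaf vanishes identically as soon as $V$ has rank one. Indeed, for any holomorphic vector bundle $W$ on $X$ one has $\bigwedge^k W = 0$ whenever $k > {\rm rank}(W)$; applying this to $W = V^*$ with ${\rm rank}(V^*) = {\rm rank}(\mc V) = 1$ gives $\bigwedge^2 V^* = 0$. Consequently $\ad(E_G)\otimes\bigwedge^2 V^* = 0$, so $H^0(X, \ad(E_G)\otimes\bigwedge^2 V^*) = 0$, which forces $\kappa_{\nabla} = 0$. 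This is essentially the entire content of the proof; no cohomological input beyond the placement of $\kappa_{\nabla}$ already recorded in the excerpt is required.

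If one prefers a pointwise argument that does not invoke the vanishing of the exterior power directly, I would instead use that $\kappa_{\nabla}$ is $\mc O_X$--bilinear and skew-symmetric in its two arguments -- which is precisely what it means for it to define a section of $\ad(E_G)\otimes\bigwedge^2 V^*$. Working on a trivializing open subset $U \subseteq X$ over which the line bundle $V$ is generated by a single local section $e$, every local section of $V$ over $U$ has the form $f\cdot e$ for some local section $f$ of $\mc O_X$. By $\mc O_X$--bilinearity, $\kappa_{\nabla}(f\cdot e,\, g\cdot e) = fg\,\kappa_{\nabla}(e, e)$, and by skew-symmetry $\kappa_{\nabla}(e, e) = -\kappa_{\nabla}(e, e)$, whence $\kappa_{\nabla}(e, e) = 0$. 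Thus $\kappa_{\nabla}$ vanishes on every such $U$, and therefore vanishes on all of $X$.

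There is no real obstacle in this statement: it is a formal consequence of the skew-symmetry of $\kappa_{\nabla}$ together with the rank hypothesis, and either formulation above closes the argument at once. The only point deserving a word of care is the tensoriality of $\kappa_{\nabla}$, namely that $[\nabla(s), \nabla(t)] - \nabla([s, t])$ is $\mc O_X$--linear in each slot rather than merely $\bb C$--linear; but this is exactly the property used in the excerpt when $\kappa_{\nabla}$ is recognized as an element of $H^0(X, \ad(E_G)\otimes\bigwedge^2 V^*)$, so it may be taken as given.
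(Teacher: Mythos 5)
Your proposal is correct and its first argument is exactly the paper's proof: since ${\rm rank}(V)=1$ forces $\bigwedge^2 V^* = 0$, the curvature $\kappa_{\nabla}$, being a global section of $\ad(E_G)\otimes\bigwedge^2 V^* = 0$, vanishes identically. Your alternative pointwise argument via skew-symmetry is a valid rephrasing of the same fact, but adds nothing beyond the main line.
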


\begin{proof}
	If ${\rm rank}(\mc V) = 1$, then $\bigwedge\nolimits^2 V^* = 0$ and so for any 
	$\mc V$--valued connection $\nabla$ on $E_G$, its curvature 
	$\kappa_{\nabla}$, being an element of $H^0(X, \ad(E_G)\otimes \bigwedge\nolimits^2V^*) = 0$, 
	vanishes identically. This completes the proof. 
\end{proof}

\section{Basic Properties}\label{sec:basic-properties}

\subsection{Extension of structure groups}
Let $G$ and $H$ be linear algebraic groups over $\bb C$ with their Lie algebras 
$\mf g$ and $\mf h$, respectively. Given a homomorphism of algebraic groups $f : G \to H$ 
over $\bb C$, let $df : \mf g \to \mf h$ be the Lie algebra homomorphism induced by $f$. 
Let $p : E_G \to X$ be a holomorphic principal $G$--bundle over $X$, and let 
$$p' : E_H := E_G\times^f H \to X$$ 
be the associated principal $H$--bundle on $X$ obtained by extending the structure group 
of $E_G$ along $f$. Let 
\begin{align*}
	& \ad(f) : \ad(E_G) \longrightarrow \ad(E_H) \\ 
	\text{and} \ \ 
	& \At(f) : \At(E_G) \longrightarrow \At(E_H) 
\end{align*}
be the homomorphisms of the adjoint bundles and the Atiyah bundles of $E_G$ and $E_H$, 
respectively, induced by $f$. Then we have the following commutative diagram of vector bundle 
homomorphisms 
\begin{equation}\label{diag:comm-diag-of-Atiyah-exact-seqn-for-extn-of-str-grp}
	\begin{gathered}
		\xymatrix{
			0 \ar[r] & \ad(E_G) \ar[d]^{\ad(f)} \ar[r]^{\iota_G} & \At(E_G) \ar[d]^{\At(f)} \ar[r]^-{d'p} 
			& TX \ar@{=}[d] \ar[r] & 0 \\ 
			0 \ar[r] & \ad(E_H) \ar[r]^{\iota_H} & \At(E_H) \ar[r]^-{d'p'} & TX \ar[r] & 0\,. 
		}
	\end{gathered}
\end{equation}
It is clear from the above diagram that a holomorphic connection on $E_G$ induces a 
holomorphic connection on $E_H := E_G\times^fH$. 
Let $$\rho' : \At(E_H)\oplus V \to TX$$ be the homomorphism defined by 
$$\rho'(\xi, v) = d'p'(\xi) - \varphi(v),$$
for all locally defined sections $\xi$ of $\At(E_H)$ and $v$ of $V$, respectively. 
Let $$\widetilde{\rho'} : \At_{\varphi}(E_H) := \ker(\rho') \longrightarrow V$$ 
be the restriction of the second projection map. 
Then we have a vector bundle homomorphism 
$$\At_{\varphi}(f) : \At_{\varphi}(E_G) \longrightarrow \At_{\varphi}(E_H)$$ 
such that $\widetilde{\rho'}\circ\At_{\varphi}(f) = \widetilde{\rho}$. 
Thus, the above commutative diagram 
\eqref{diag:comm-diag-of-Atiyah-exact-seqn-for-extn-of-str-grp} induces the 
following commutative diagram of vector bundles and homomorphisms 
\begin{equation}\label{diag:phi-twisted-diag-of-Atiyah-exact-seqns-for-extn-of-str-grp}
	\begin{gathered}
		\xymatrix{
			0 \ar[r] & \ad(E_G) \ar[d]^{\ad(f)} \ar[r] & \At_{\varphi}(E_G) 
			\ar[d]^{\At_{\varphi}(f)} \ar[r]^-{\widetilde{\rho}} & V \ar@{=}[d] \ar[r] & 0 \\ 
			0 \ar[r] & \ad(E_H) \ar[r] & \At_{\varphi}(E_H) \ar[r]^-{\widetilde{\rho'}} & 
			V \ar[r] & 0\,. 
		}
	\end{gathered}
\end{equation}
From this, we have a natural homomorphism of cohomologies 
\begin{equation}
	H^1(f) : H^1(X, \ad(E_G)\otimes V^*) \longrightarrow H^1(X, \ad(E_H)\otimes V^*) 
\end{equation}
such that $H^1(f)(\Phi_{\mc V}(E_G)) = \Phi_{\mc V}(E_H)$. 
As an immediate consequence of it, we have the following result.

\begin{proposition}\label{prop:Lie-algebroid-connection-under-extn-of-str-grp}
	Let $f : G \to H$ be a homomorphism of linear algebraic groups over $\mathbb{C}$. 
	Let $E_G$ be a holomorphic principal $G$--bundle on $X$, and let $E_H$ be the 
	holomorphic principal $H$--bundle on $X$ obtained from $E_G$ by extension of its 
	structure group along $f$. Then any $\mc V$--valued Lie algebroid connection on 
	$E_G$ induces a $\mc V$--valued Lie algebroid connection on $E_H$.
\end{proposition}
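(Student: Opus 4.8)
The plan is to construct the induced connection directly by composing the given connection with the structure-group map $\At_{\varphi}(f)$, and then to verify that the defining splitting property is preserved. Let $\nabla : V \to \At_{\varphi}(E_G)$ be the given $\mc V$--valued Lie algebroid connection on $E_G$, so that $\widetilde{\rho}\circ\nabla = \Id_V$ by Definition \ref{def:Lie-alg-conn-on-G-Bundles}. I would set
$$\nabla' := \At_{\varphi}(f) \circ \nabla : V \longrightarrow \At_{\varphi}(E_H),$$
which is a well-defined vector bundle homomorphism because $\At_{\varphi}(f) : \At_{\varphi}(E_G) \to \At_{\varphi}(E_H)$ is the homomorphism furnished by the left-hand square of diagram \eqref{diag:phi-twisted-diag-of-Atiyah-exact-seqns-for-extn-of-str-grp}.

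The second step is to check that $\nabla'$ is a connection, that is, that $\widetilde{\rho'}\circ\nabla' = \Id_V$. Using the relation $\widetilde{\rho'}\circ\At_{\varphi}(f) = \widetilde{\rho}$ recorded just before diagram \eqref{diag:phi-twisted-diag-of-Atiyah-exact-seqns-for-extn-of-str-grp}, I would compute
$$\widetilde{\rho'}\circ\nabla' = \widetilde{\rho'}\circ\At_{\varphi}(f)\circ\nabla = \widetilde{\rho}\circ\nabla = \Id_V,$$
so that $\nabla'$ is indeed a $\mc V$--valued Lie algebroid connection on $E_H$, and the proposition follows.

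As an alternative I could argue purely cohomologically: by Proposition \ref{prop:E_G-admits-Lie-alg-conn-iff-Atiyah-cls-vanishes} the existence of a connection on $E_G$ is equivalent to the vanishing $\Phi_{\mc V}(E_G) = 0$, and since $H^1(f)(\Phi_{\mc V}(E_G)) = \Phi_{\mc V}(E_H)$ one immediately obtains $\Phi_{\mc V}(E_H) = 0$, hence a connection on $E_H$. I prefer the first route, since it exhibits the induced connection explicitly rather than merely asserting its existence. There is essentially no serious obstacle in either case: the only point requiring genuine care is the construction of $\At_{\varphi}(f)$ together with the verification that it intertwines the two projections, i.e.\ that $\widetilde{\rho'}\circ\At_{\varphi}(f) = \widetilde{\rho}$. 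Since this compatibility has already been established in the discussion preceding the statement, the proposition reduces to the formal composition above.
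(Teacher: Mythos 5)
Your proposal is correct, but your preferred route differs from the paper's own proof. The paper argues purely cohomologically --- exactly your stated alternative: a connection on $E_G$ forces $\Phi_{\mc V}(E_G) = 0$, the naturality $H^1(f)(\Phi_{\mc V}(E_G)) = \Phi_{\mc V}(E_H)$ gives $\Phi_{\mc V}(E_H) = 0$, and Proposition \ref{prop:E_G-admits-Lie-alg-conn-iff-Atiyah-cls-vanishes} yields a connection on $E_H$. Your primary argument instead constructs the induced connection explicitly as $\nabla' = \At_{\varphi}(f)\circ\nabla$ and verifies $\widetilde{\rho'}\circ\nabla' = \widetilde{\rho'}\circ\At_{\varphi}(f)\circ\nabla = \widetilde{\rho}\circ\nabla = \Id_V$ from the intertwining relation $\widetilde{\rho'}\circ\At_{\varphi}(f) = \widetilde{\rho}$ established in the discussion preceding diagram \eqref{diag:phi-twisted-diag-of-Atiyah-exact-seqns-for-extn-of-str-grp}. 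Both are valid, and you correctly identify that the only substantive input in your route is the existence and compatibility of $\At_{\varphi}(f)$, which the paper has already set up. Your direct construction in fact buys something the paper's proof does not: it produces a \emph{specific, functorial} connection $\nabla'$ from each given $\nabla$, which matches the word ``induces'' in the statement more faithfully, whereas the cohomological argument only shows that $E_H$ admits \emph{some} connection once $E_G$ does, with no canonical choice. The paper's route, by contrast, is shorter given its established Atiyah-class machinery and extends to situations where one knows vanishing of $\Phi_{\mc V}(E_H)$ without an explicit splitting in hand. No gap in either argument.
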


\begin{proof}
	If $E_G$ admits a $\mc V$--valued Lie algebroid connection, 
	then $\Phi_{\mc V}(E_G) = 0$. 
	Since $\Phi_{\mc V}(E_H) = H^1(f)(\Phi_{\mc V}(E_G)) = 0$, 
	the result follows from 
	Proposition \ref{prop:E_G-admits-Lie-alg-conn-iff-Atiyah-cls-vanishes}
\end{proof}

\subsection{Reduction of structure group}

Now it is interesting to ask the following question: 
Suppose that $f : G \to H$ be a homomorphism of linear algebraic groups over $\bb C$. 
If $E_H$ admits a $\mc V$--valued connection, does $E_G$ admits a $\mc V$--valued connection? 
We give partial answers to this question. 

\begin{proposition}\label{prop:Key-reduction-for-reductive-subgroup}
	Let $f : G \to H$ be an injective homomorphism of linear algebraic groups over $\bb C$ 
	with $G$ reductive. Let $E_G$ be a principal $G$--bundle on $X$, and let 
	$$E_H = E_G\times^f H$$ 
	be the principal $H$--bundle on $X$ obtained by extending the structure group of 
	$E_G$ along the homomorphism $f$. 
	If $E_H$ admits a $\mc V$--valued Lie algebroid connection, then $E_G$ admits a 
	$\mc V$--valued Lie algebroid connection. 
\end{proposition}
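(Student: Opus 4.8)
The plan is to reduce the statement to the vanishing of $\mc V$--valued Atiyah classes via Proposition \ref{prop:E_G-admits-Lie-alg-conn-iff-Atiyah-cls-vanishes}, and then to use reductivity of $G$ to show that the comparison map $H^1(f)$ on cohomology is \emph{injective}. By Proposition \ref{prop:E_G-admits-Lie-alg-conn-iff-Atiyah-cls-vanishes}, $E_G$ admits a $\mc V$--valued Lie algebroid connection if and only if $\Phi_{\mc V}(E_G) = 0$, and similarly for $E_H$. From the commutative diagram \eqref{diag:phi-twisted-diag-of-Atiyah-exact-seqns-for-extn-of-str-grp} one has the naturality relation $H^1(f)(\Phi_{\mc V}(E_G)) = \Phi_{\mc V}(E_H)$. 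Since $E_H$ admits a $\mc V$--valued connection by hypothesis, $\Phi_{\mc V}(E_H) = 0$, and hence $\Phi_{\mc V}(E_G) \in \ker H^1(f)$. Thus it would suffice to prove that $H^1(f)$ is injective, for then $\Phi_{\mc V}(E_G) = 0$ and the conclusion follows again from Proposition \ref{prop:E_G-admits-Lie-alg-conn-iff-Atiyah-cls-vanishes}.

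The key step is to exhibit $\ad(f) : \ad(E_G) \to \ad(E_H)$ as the inclusion of a direct summand. Recall that, via extension of structure group, $\ad(E_H) \cong E_G\times^{\Ad_H\circ f}\mf h$, where $G$ acts on $\mf h$ through $\Ad_H\circ f$. Because $f$ is injective and we work over $\bb C$ (characteristic zero), $f$ is a closed immersion and the induced map $df : \mf g \to \mf h$ is injective; moreover $df$ is $G$--equivariant, since $df\circ\Ad_G(g) = \Ad_H(f(g))\circ df$ for all $g \in G$. Hence $df(\mf g)$ is a $G$--subrepresentation of $\mf h$. As $G$ is reductive, every finite--dimensional representation of $G$ is completely reducible, so there is a $G$--invariant complement $\mf m$ with $\mf h = df(\mf g)\oplus\mf m$ as $G$--modules.

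Applying the associated--bundle construction $E_G\times^{\Ad}(-)$, which is additive and exact, this $G$--equivariant splitting yields a vector bundle decomposition
\begin{equation*}
	\ad(E_H) \;\cong\; \ad(E_G)\,\oplus\,\bigl(E_G\times^{\Ad}\mf m\bigr),
\end{equation*}
in which $\ad(f)$ is precisely the inclusion of the first summand. Tensoring with $V^*$ preserves this as a split injection of vector bundles, and passing to cohomology a split injection of sheaves induces a (split) injection on each $H^i$. Therefore $H^1(f)$ is injective, which forces $\Phi_{\mc V}(E_G) = 0$ and completes the argument. The one nontrivial ingredient—and the only place where the hypotheses on $G$ and $f$ are genuinely used—is the $G$--equivariant splitting $\mf h = df(\mf g)\oplus\mf m$, which rests on injectivity of $df$ (from injectivity of $f$ in characteristic zero) together with complete reducibility of representations of the reductive group $G$; every other step is formal.
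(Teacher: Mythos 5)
Your proof is correct, and it hinges on exactly the same key ingredient as the paper's own argument: injectivity of $df$ (from injectivity of $f$ in characteristic zero) together with complete reducibility of representations of the reductive group $G$ yields the $G$--module decomposition $\mf h = df(\mf g)\oplus \mf m$, which exhibits $\ad(E_G)$ as a direct summand of $\ad(E_H)$. The only divergence is the final formal step --- you conclude by observing that the split injection $\ad(f)\otimes\Id_{V^*}$ makes $H^1(f)$ injective, so $\Phi_{\mc V}(E_H)=0$ forces $\Phi_{\mc V}(E_G)=0$, whereas the paper uses the bundle-level retraction $\widetilde{\pi_{\mf g}} : \ad(E_H)\to\ad(E_G)$ to transport a splitting $\eta$ of the twisted Atiyah sequence of $E_H$ directly into a splitting for $E_G$ via $\widetilde{\pi_{\mf g}}\circ\eta\circ\At_{\varphi}(f)$ --- a purely cosmetic repackaging of the same construction.
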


\begin{proof}
	Let $\mf g := {\rm Lie}(G)$ and $\mf h := {\rm Lie}(H)$ be the Lie algebras of $G$ 
	and $H$, respectively. Let $df : \mf g \to \mf h$ be the Lie algebra homomorphism 
	induced by $f$, and let 
	$$\ad(f) : \ad(E_G) \longrightarrow \ad(E_H)$$
	be the vector bundle homomorphism induced by $df$.  
	Let $\alpha : G \to \End(\mf g)$ and $\beta : H \to \End(\mf h)$ be the 
	adjoint actions of $G$ and $H$, respectively, on their Lie algebras. 
	Then the composite map 
	$$\beta\circ f : G \to \End(\mf h)$$ 
	gives an adjoint action of $G$ on $\mf h$. 
	Since $df$ is a $G$--module homomorphism and $G$ is reductive, 
	there is a $G$--submodule $W$ of $\mf h$ such that 
	\begin{equation}\label{eqn:direct-sum-decomp}
		\mf h = df(\mf g)\bigoplus W 
	\end{equation}
	as $G$--modules. Since $df$ is injective, from the direct sum decomposition of 
	$G$--modules in \eqref{eqn:direct-sum-decomp} projecting to the first factor 
	we get a $G$--module homomorphism $\pi_{\mf g} : \mf h \to \mf g$ such that 
	$\pi_{\mf g}\circ df = \Id_{\mf g}$. Then $\pi_{\mf g}$ induces a vector bundle 
	homomorphism 
	\begin{equation}
		\widetilde{\pi_{\mf g}} : \ad(E_H) \longrightarrow \ad(E_G) 
	\end{equation}
	such that $\widetilde{\pi_{\mf g}}\circ\ad(f) = \Id_{{\rm ad}(E_G)}$. 
	
	Suppose that $E_H$ admits a $\mc V$--valued Lie algebroid connection. 
	Then there exists a $\mc O_X$--module homomorphism 
	$$\eta : \At_{\varphi}(E_H) \to \ad(E_H)$$ 
	such that $\eta\circ\iota_H = \Id_{\ad(E_H)}$, 
	where $\iota_H : \ad(E_H) \to \At_{\varphi}(E_H)$ is the homomorphism 
	in \eqref{diag:comm-diag-of-Atiyah-exact-seqn-for-extn-of-str-grp}. 
	Then it follows from the commutativity of the diagram 
	\eqref{diag:comm-diag-of-Atiyah-exact-seqn-for-extn-of-str-grp} 
	that the composition 
	$$\widetilde{\pi_{\mf g}}\circ\eta\circ\At(f) : \At_{\varphi}(E_G) \to \ad(E_G)$$
	gives an $\mc O_X$--linear splitting of the top exact sequence in 
	\eqref{diag:comm-diag-of-Atiyah-exact-seqn-for-extn-of-str-grp}. 
	Thus $E_G$ admits a $\mc V$--valued Lie algebroid connection. 
\end{proof}

Now we consider the case when the structure group of a principal bundle is not reductive. 
Let $G$ be a reductive linear algebraic group over $\bb C$. 
A closed subgroup $P$ of $G$ is said to be {\it parabolic} if $G/P$ is a complete $\bb C$--variety. 
Let $P$ be a parbolic subgroup of $G$.  
Let $\mf{R}_u(P)$ be the unipotent radical of $P$, and let 
$$q : P \longrightarrow P/\mf{R}_u(P)$$ 
be the associated quotient map. Let $L \subseteq P$ be a {\it Levi factor of $P$}; 
a closed connected subgroup of $P$ such that $q\big\vert_L : L \to P/\mf{R}_u(P)$ 
is an isomorphism of algebraic groups over $\bb C$. Note that $L$ is reductive. 
Given a principal $P$--bundle $E_P$ on $X$, let $E_L := E_P\times^{q'}L$ be the 
principal $L$--bundle on $X$ obtained by extending the structure group of $E_P$ 
along the homomorphism $$q' := (q\big\vert_L)^{-1}\circ\,q : P \to L.$$ 
The action of $P$ on the nilpotent radical $\mf{n} := \Lie(\mf{R}_u(P))$ of the 
Lie algebra $\mf{p} := \Lie(P)$ gives rise to a subbundle 
$E_P(\mf{n}) := E_P\times^P\mf{n}$ of the adjoint bundle $\ad(E_P)$ of $E_P$, 
and the associated quotient bundle $\ad(E_P)/E_P(\mf{n}) \cong E_P(\mf{l}) = \ad(E_L)$, 
where $\mf{l} = \Lie(L)$ is the Lie algebra of $L$. 
Then we have the following commutative diagram of vector bundle homomorphisms with all 
rows and columns exact 
(c.f. \eqref{diag:phi-twisted-diag-of-Atiyah-exact-seqns-for-extn-of-str-grp}): 
\begin{equation}
	\begin{gathered}
		\xymatrix{
			& 0 \ar[d] & 0 \ar[d] & & & \\ 
			& E_P(\mf{n}) \ar@{=}[r] \ar[d] & E_P(\mf{n}) \ar[d] & & & \\ 
			0 \ar[r] & \ad(E_P) \ar[r] \ar[d]^-{\ad(q')} & \At_{\varphi}(E_P) \ar[d]^-{\At(q')} 
			\ar[r]^-{\widetilde{\rho}_P} & V \ar@{=}[d] \ar[r] & 0 \\ 
			0 \ar[r] & \ad(E_L) \ar[d] \ar[r] & \At_{\varphi}(E_L) \ar[d] 
			\ar[r]^-{\widetilde{\rho}_L} & V \ar[r] & 0 \\ 
			& 0 & 0 & &&& 
		}
	\end{gathered}
\end{equation}
Suppose that $E_L$ admits a $\mc V$--valued Lie algebroid connection 
$\nabla : V \to \At_{\varphi}(E_L)$. Then $\widetilde{\rho}_L\circ\nabla = \Id_V$. 
Then the subsheaf 
$$\mc{E}_\nabla := \At(q')^{-1}\left(\nabla(V)\right) \subseteq \At_{\varphi}(E_P)$$ 
fits into the following short exact sequence of $\mc{O}_X$--modules 
\begin{equation}\label{eqn:short-ext-seq-for-reduction-of-str-grp-connection}
	0 \to E_P(\mf{n}) \to \mc{E}_\nabla \to V \to 0 
\end{equation}
on $X$ whose splitting gives rise to a $\mc V$--valued connection on $E_P$. 
Note that the above short exact sequence 
\eqref{eqn:short-ext-seq-for-reduction-of-str-grp-connection} defines a cohomology class 
\begin{equation}
	\Phi(E_P, L, \nabla) \in H^1(X, E_P(\mf{n})\otimes V^*), 
\end{equation}
which vanishes if and only if the exact sequence in 
\eqref{eqn:short-ext-seq-for-reduction-of-str-grp-connection} splits $\mc{O}_X$--linearly. 
From this, we have the following result. 

\begin{proposition}\label{prop:connection-on-E_P-from-E_L}
	With the above notations, if $H^1(X, E_P(\mf{n})\otimes V^*) = 0$, then a 
	$\mc V$--valued Lie algebroid connection on $E_L$ gives rise to a 
	$\mc V$--valued Lie algebroid connection on $E_P$. 
\end{proposition}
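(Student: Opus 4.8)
The plan is to recognise that almost all of the construction has already been carried out in the discussion preceding the statement, so that the proof reduces to a single obstruction-theoretic observation. First I would take as input a $\mc V$--valued Lie algebroid connection $\nabla : V \to \At_{\varphi}(E_L)$ on $E_L$, so that $\widetilde{\rho}_L\circ\nabla = \Id_V$, and form the subsheaf $\mc{E}_\nabla := \At(q')^{-1}(\nabla(V)) \subseteq \At_{\varphi}(E_P)$. Using the commutativity $\widetilde{\rho}_L\circ\At(q') = \widetilde{\rho}_P$ read off from the diagram above, I would check that $\widetilde{\rho}_P$ maps $\mc{E}_\nabla$ onto $V$: for a local section $x$ of $\mc{E}_\nabla$ one has $\At(q')(x) = \nabla(v)$ for a unique section $v$ of $V$, whence $\widetilde{\rho}_P(x) = \widetilde{\rho}_L(\nabla(v)) = v$. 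The kernel of $\widetilde{\rho}_P\big\vert_{\mc{E}_\nabla}$ consists of those $x$ with $v = 0$, i.e. $\At(q')(x) = 0$, which by the left-hand column of the diagram is precisely $E_P(\mf n)$. This reproduces the short exact sequence \eqref{eqn:short-ext-seq-for-reduction-of-str-grp-connection}.

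Next I would pass to extension classes. The sequence $0 \to E_P(\mf n) \to \mc{E}_\nabla \to V \to 0$ of $\mc O_X$--modules determines a class $\Phi(E_P, L, \nabla) \in \mathrm{Ext}^1_{\mc O_X}(V, E_P(\mf n)) \cong H^1(X, E_P(\mf n)\otimes V^*)$, and this class vanishes if and only if the sequence admits an $\mc O_X$--linear splitting. Here the hypothesis does all the work: since $H^1(X, E_P(\mf n)\otimes V^*) = 0$ by assumption, the whole group is trivial, so $\Phi(E_P, L, \nabla) = 0$ and the sequence splits.

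Finally I would translate a splitting back into a connection. Composing a splitting $s : V \to \mc{E}_\nabla$ with the inclusion $\mc{E}_\nabla \hookrightarrow \At_{\varphi}(E_P)$ gives a vector bundle homomorphism $\nabla_P : V \to \At_{\varphi}(E_P)$, and since $s$ is a section of $\widetilde{\rho}_P\big\vert_{\mc{E}_\nabla}$ we obtain $\widetilde{\rho}_P\circ\nabla_P = \Id_V$. By Definition \ref{def:Lie-alg-conn-on-G-Bundles} this is exactly a $\mc V$--valued Lie algebroid connection on $E_P$, which completes the argument.

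The only place demanding care is the diagram bookkeeping in the first step—verifying that the restriction of $\widetilde{\rho}_P$ to $\mc{E}_\nabla$ is surjective with kernel $E_P(\mf n)$, and that the resulting extension class genuinely lives in the cohomology group named in the hypothesis. I expect no real obstacle beyond this, because once the correct exact sequence is in place the vanishing hypothesis annihilates the obstruction for free; the essential content is organisational rather than computational.
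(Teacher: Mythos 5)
Your proposal is correct and follows essentially the same route as the paper, whose ``proof'' is precisely the discussion preceding the statement: the construction of $\mc{E}_\nabla = \At(q')^{-1}(\nabla(V))$, the short exact sequence \eqref{eqn:short-ext-seq-for-reduction-of-str-grp-connection}, and the observation that the obstruction class $\Phi(E_P, L, \nabla)$ lies in the group assumed to vanish. Your write-up actually supplies slightly more detail than the paper does (the identification of the kernel with $E_P(\mf{n})$ via the left column, and surjectivity onto $V$, which uses exactness of the column to lift $\nabla(v)$ through $\At(q')$), so there is nothing to correct.
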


\section{Existence of Lie Algebroid Connections}
In this section we assume that $X$ is an irreducible smooth complex projective curve 
of genus $g \geq 2$. The {\it degree} of a coherent sheaf of $\mc O_X$--modules $E$ on $X$ 
is defined by 
$$\deg(E) := \int_X c_1(E) \in \bb Z,$$
where $c_1(E)$ stands for the first Chern class of $E$. 
The rational number 
$$\mu(E) := \frac{\deg(E)}{{\rm rank}(E)}$$ 
is called the {\it slope} of $E$. 

\begin{definition}
	A vector bundle $E$ on $X$ is said to be {\it stable} (resp., {\it semistable}) 
	if for any non-zero proper subsheaf $F$ of $E$ we have 
	$\mu(F) < \mu(E)$ (resp., $\mu(F) \leq \mu(E)$). 
\end{definition}

The notion of slope semistablity and stability has a natural generalization to the case of 
principal $G$--bundles on $X$. Let $G$ be a reductive linear algebraic group over $\bb C$. 
If a principal $G$--bundle $E_G$ on $X$ admits a holomorphic reduction $E_P \subseteq E_G$ 
of its structure group to a parabolic subgroup $P \subseteq G$, for any character 
$\chi : P \to \bb G_m$ of $P$, we get a holomorphic line bundle 
$$\chi_*E_P := E_P\times^{\chi} \bb G_a$$ 
on $X$. 

\begin{definition}\cite{Ramanathan-I, Ramanathan-1975}
	A principal $G$--bundle $E_G$ on $X$ is said to be {\it semistable} (resp., {\it stable}) 
	if for any reduction $E_P \subseteq E_G$ of the structure group of $E_G$ to a proper 
	parabolic subgroup $P \subseteq G$, and any nontrivial dominant character 
	$\chi : P \to {\bb G}_m$, we have $\deg(\chi_*E_P) \leq 0 (\textnormal{resp., } < 0)$. 
\end{definition}

Fix a Lie algebroid $\mc V = (V, [\cdot, \cdot], \varphi)$ on $X$ 
such that the underlying vector bundle $V$ of $\mc V$ is stable. 
Let 
$$\mu(\mc V) := \frac{\deg(V)}{{\rm rank}(V)}$$ 
be the {\it slope} of the underlying vector bundle $V$ of the Lie algebroid $\mc V$. 
Note that $TX$ is a line bundle on $X$ with the slope $\mu(TX) = 2-2g$. 

If $\mu(\mc V) > 2-2g = \mu(TX)$, then both $V$ and $TX$ being stable vector bundles 
on $X$ we have $H^0(X, \Hom(V, TX)) = 0$ (see \cite[Proposition 1.2.7]{Huybrechts-Lehn-2010}), 
and hence $\varphi = 0$ in this case. Then a $\mc V$--valued Lie algebroid connection 
on $E_G$ is just a global section of $\ad(E_G)\otimes V^*$; so we may take the 
zero section in $H^0(X, \ad(E_G)\otimes V^*)$, in particular. 

If $\mu(\mc V) = 2-2g = \mu(TX)$, then any non-zero $\mc O_X$--module homomorphism 
$\varphi : V \to TX$ is an isomorphism (see \cite[Proposition 1.2.7]{Huybrechts-Lehn-2010}). 
Then we may replace $V$ with $TX$ so that a $\mc V$--valued Lie algebroid connection 
on $E_G$ is nothing but a holomorphic connection on $E_G$. This case is studied in 
detail in \cite{Azad-Biswas-2002}. 

Now we assume that $\mu(\mc V) < 2-2g = \mu(TX)$. Then we have the following. 

\begin{proposition}\label{prop:existence_semistable-G-bundles-on-curve}
	Let $G$ be a reductive linear algebraic group over $\bb C$. 
	With the above assumptions on $\mc V$, any semistable principal $G$--bundle 
	$E_G$ on $X$ admits a $\mc V$--valued Lie algebroid connection. 
\end{proposition}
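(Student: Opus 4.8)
The plan is to reduce the statement to the vanishing of the obstruction class $\Phi_{\mc V}(E_G)$ and then to kill the whole ambient cohomology group by a slope computation on the curve. By Proposition \ref{prop:E_G-admits-Lie-alg-conn-iff-Atiyah-cls-vanishes}, the bundle $E_G$ carries a $\mc V$--valued Lie algebroid connection as soon as $\Phi_{\mc V}(E_G) = 0$ in $H^1(X, \ad(E_G)\otimes V^*)$; hence it suffices to show that this entire cohomology group vanishes. Note that the case $\mu(\mc V) \geq 2-2g$ has already been disposed of in the discussion preceding the statement, so the standing hypothesis throughout is $\mu(V) = \mu(\mc V) < 2-2g$.

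The first step is to record two structural facts about the adjoint bundle of a semistable reductive principal bundle. Since $G$ is reductive and the ground field is $\bb C$, semistability of $E_G$ forces the associated vector bundle $\ad(E_G) = E_G\times^{\rm ad}\mf g$ to be a \emph{semistable} vector bundle: the adjoint representation carries the identity component of the centre of $G$ into the scalars of $\GL(\mf g)$, and for such representations semistability is inherited by the associated bundle (the result of Ramanan--Ramanathan, valid in characteristic zero; see \cite{Ramanathan-1975}). Moreover $\deg\ad(E_G) = 0$: the adjoint action is trivial on the centre of $\mf g$ and preserves a nondegenerate invariant form on the semisimple part, so $\mathrm{ad}(G)\subseteq\SL(\mf g)$ and thus $\det\ad(E_G) \cong \mc O_X$. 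Consequently $\ad(E_G)$ is semistable of slope $0$, and the same then holds for its dual $\ad(E_G)^*$.

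The second step is Serre duality. Writing $K_X = \Omega^1_X$ for the canonical bundle, so that $\mu(K_X) = 2g-2$, we have $H^1(X, \ad(E_G)\otimes V^*)^* \cong H^0(X, \ad(E_G)^*\otimes V\otimes K_X)$. A nonzero element of the right-hand group is precisely a nonzero sheaf homomorphism $V^*\otimes K_X^{-1} \to \ad(E_G)^*$. The source $V^*\otimes K_X^{-1}$ is stable, being a stable bundle twisted by a line bundle, of slope $-\mu(V)-(2g-2)$, which is strictly positive because $\mu(V) < 2-2g$; the target $\ad(E_G)^*$ is semistable of slope $0$. Since a nonzero homomorphism between semistable sheaves can exist only when the slope of the source does not exceed that of the target (see \cite[Proposition 1.2.7]{Huybrechts-Lehn-2010}), no such nonzero map exists. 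Hence $H^0(X, \ad(E_G)^*\otimes V\otimes K_X) = 0$, so $H^1(X, \ad(E_G)\otimes V^*) = 0$, and the existence of a $\mc V$--valued Lie algebroid connection follows from Proposition \ref{prop:E_G-admits-Lie-alg-conn-iff-Atiyah-cls-vanishes}.

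The Serre duality step and the slope arithmetic are entirely routine; the place that deserves genuine care, and which I regard as the main obstacle, is the first step, namely the claim that $\ad(E_G)$ is semistable of degree zero. This is exactly where the three hypotheses --- that $G$ is reductive, that $E_G$ is semistable, and that the field is $\bb C$ --- are all used, and it rests on the nontrivial transfer of semistability from a reductive principal bundle to its associated bundles for representations with trivial central weight. Once this is in hand, everything else is a direct consequence of the single numerical assumption $\mu(\mc V) < 2-2g$.
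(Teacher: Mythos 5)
Your proof is correct and follows essentially the same route as the paper: reduce to the vanishing of $\Phi_{\mc V}(E_G)$, apply Serre duality, and kill $H^0(X, \ad(E_G)^*\otimes V\otimes K_X)$ by a slope argument based on semistability of $\ad(E_G)$ (for which the paper cites \cite[Proposition 2.10]{Anchouche-Biswas-2001} rather than your Ramanan--Ramanathan attribution) together with $\deg \ad(E_G) = 0$. The only cosmetic difference is that you read a section as a nonzero homomorphism from the stable bundle $V^*\otimes K_X^{-1}$ of positive slope to the semistable bundle $\ad(E_G)^*$ of slope zero, whereas the paper first makes the tensor product $\ad(E_G)^*\otimes V\otimes K_X$ semistable via \cite[Theorem 3.1.4]{Huybrechts-Lehn-2010} and then uses that a semistable bundle of negative slope has no sections --- both being instances of the same slope inequality \cite[Proposition 1.2.7]{Huybrechts-Lehn-2010}.
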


\begin{proof}
	Let $\Phi_{\mc V}(E_G) \in H^1(X, \ad(E_G)\otimes V^*)$ be the $\mc V$--valued Atiyah class 
	of $E_G$. By Serre duality, we have 
	\begin{equation*}
		H^1(X, \ad(E_G)\otimes V^*) \cong H^0(X, \ad(E_G)^*\otimes V\otimes K_X)^*,
	\end{equation*} 
	where $K_X = \Omega_X^1$ is the canonical line bundle on $X$. 
	Since $E_G$ is semistable by assumption, its adjoint bundle $\ad(E_G)$ is semistable 
	by \cite[Proposition 2.10]{Anchouche-Biswas-2001}. 
	Then the tensor product bundle $\ad(E_G)^*\otimes V\otimes K_X$ is semistable 
	(see \cite[Theorem 3.1.4]{Huybrechts-Lehn-2010}). 
	Since $G$ is reductive, the adjoint bundle $\ad(E_G)$ is isomorphic to its dual, 
	and hence $\deg(\ad(E_G)) = 0$. Then we have 
	$$\mu(\ad(E_G)^*\otimes V\otimes K_X) = \mu(K_X)+\mu(V) = 2g-2+\mu(V)<0.$$
	Then by \cite[Proposition 1.2.7]{Huybrechts-Lehn-2010} $H^0(X, \ad(E_G)^*\otimes V\otimes K_X) = 0$, 
	and hence $\Phi_{\mc V}(E_G) = 0$. Hence the result follows. 
\end{proof}

\begin{theorem}\label{thm:main-thm}
	Fix a Lie algebroid $\mc V = (V, [\cdot\,,\,\cdot], \varphi)$ on $X$ 
	such that $V$ is stable with $\mu(V) < 2-2g = \deg(TX)$. 
	Let $G$ be a reductive linear algebraic group over $\bb C$. 
	Let $E_G$ be a principal $G$--bundle on $X$. 
	Then $E_G$ admits a $\mc V$--valued Lie algebroid connection. 
\end{theorem}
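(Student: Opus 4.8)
The plan is to reduce the general case to the semistable case already settled in Proposition \ref{prop:existence_semistable-G-bundles-on-curve}, using the canonical (Harder--Narasimhan) reduction of $E_G$. If $E_G$ is semistable there is nothing to do. Otherwise, I would invoke the canonical reduction: there is a parabolic subgroup $P \subseteq G$ and a reduction of structure group $E_P \subseteq E_G$ such that, writing $L$ for a Levi factor of $P$ and $E_L := E_P\times^{q'}L$ for the associated $L$--bundle (notation as in \S\,\ref{sec:basic-properties}), the bundle $E_L$ is semistable. Since $L$ is reductive, Proposition \ref{prop:existence_semistable-G-bundles-on-curve} applies with $G$ replaced by $L$ and produces a $\mc V$--valued Lie algebroid connection $\nabla : V \to \At_{\varphi}(E_L)$ on $E_L$.

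The next step is to lift this connection from $E_L$ to $E_P$ via Proposition \ref{prop:connection-on-E_P-from-E_L}, whose hypothesis is the vanishing $H^1(X, E_P(\mf{n})\otimes V^*) = 0$, where $\mf{n} = \Lie(\mf{R}_u(P))$. This is the heart of the argument. By Serre duality,
\[
	H^1(X, E_P(\mf{n})\otimes V^*) \cong H^0(X, E_P(\mf{n})^*\otimes V\otimes K_X)^*,
\]
so it suffices to show that $W := E_P(\mf{n})^*\otimes V\otimes K_X$ has no nonzero global sections, and for this it is enough to prove $\mu_{\max}(W) < 0$, since a bundle with negative maximal slope admits no nonzero map from $\mc O_X$. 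To estimate $\mu_{\max}(W)$ I would use the defining positivity property of the canonical reduction, namely that $E_P(\mf{n})$ has strictly positive slopes, i.e. $\mu_{\min}(E_P(\mf{n})) > 0$. (In the $\GL_n$ picture this is transparent: the reduction is the Harder--Narasimhan flag, $\mf{n}$ corresponds to the homomorphism bundles $\Hom(\mathrm{gr}_i, \mathrm{gr}_j)$ with $i>j$ between the semistable graded pieces, each of slope $\mu(\mathrm{gr}_j)-\mu(\mathrm{gr}_i) > 0$ since the Harder--Narasimhan slopes strictly decrease.)

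Since $V$ is stable, $V\otimes K_X$ is stable of slope $\mu(V)+(2g-2)$, which is negative by the hypothesis $\mu(V) < 2-2g$. Using that tensor products of semistable bundles are semistable in characteristic zero (so that $\mu_{\max}$ is additive under $\otimes$, cf. the reference already invoked in the proof of Proposition \ref{prop:existence_semistable-G-bundles-on-curve}), I obtain
\[
	\mu_{\max}(W) = -\mu_{\min}(E_P(\mf{n})) + \mu(V) + (2g-2) < 0,
\]
both summands being negative. Hence $H^0(X, W) = 0$, giving the required vanishing. Proposition \ref{prop:connection-on-E_P-from-E_L} then furnishes a $\mc V$--valued Lie algebroid connection on $E_P$, and viewing $E_G$ as obtained from $E_P$ by extension of structure group along the inclusion $P \hookrightarrow G$, Proposition \ref{prop:Lie-algebroid-connection-under-extn-of-str-grp} transports it to a $\mc V$--valued Lie algebroid connection on $E_G$.

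I expect the cohomology vanishing to be the main obstacle, specifically the clean justification of $\mu_{\min}(E_P(\mf{n})) > 0$ for the canonical reduction together with the additivity of $\mu_{\max}$ under tensor product. The former requires correctly invoking the structure theory of the canonical reduction for principal bundles over curves (Atiyah--Bott, Ramanathan), and the latter relies on the characteristic-zero semistability of tensor products; some care is needed because $E_P(\mf{n})$ need not be semistable, so one must argue with $\mu_{\min}$ and $\mu_{\max}$ rather than with slopes of the full bundles.
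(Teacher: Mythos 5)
Your proposal is correct and follows essentially the same route as the paper: the canonical Harder--Narasimhan reduction $E_P \subseteq E_G$ with semistable Levi quotient $E_L$, Proposition \ref{prop:existence_semistable-G-bundles-on-curve} applied to $E_L$, the vanishing $H^1(X, E_P(\mf{n})\otimes V^*) = 0$ via Serre duality and slope estimates, and then Propositions \ref{prop:connection-on-E_P-from-E_L} and \ref{prop:Lie-algebroid-connection-under-extn-of-str-grp} to pass to $E_P$ and $E_G$. The only cosmetic difference is that the paper deduces $\Hom(E_P(\mf{n}), V\otimes K_X) = 0$ from $\mu_{\min}(E_P(\mf{n})) \geq 0$ (cited from \cite{HN-reduction-Anchouche-Azad-Biswas-2002}) together with semistability of $V\otimes K_X$ of negative slope, so your strict inequality $\mu_{\min}(E_P(\mf{n})) > 0$, while true for a proper canonical reduction, is more than is needed, since $\mu(V) + 2g - 2 < 0$ already makes the sum strictly negative.
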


\begin{proof}
	Let $E_G$ be a principal $G$--bundle on $X$. Since $G$ is reductive, 
	by \cite[Theorem 1]{HN-reduction-Anchouche-Azad-Biswas-2002} 
	$E_G$ admits a canonical reduction 
	$E_P \subseteq E_G$ of its structure group to a parabolic subgroup 
	$P \subseteq G$ such that the associated principal $L$--bundle 
	$$E_L := E_P\times^q L$$ 
	obtained by extension of the structure group of $E_P$ by the 
	quotient homomorphism 
	$$q : P \longrightarrow P/{\mf R}_u(P) \cong L,$$ 
	is semistable; here $L$ is the {\it Levi factor} of $P$, 
	a closed connected reductive subgroup of $P$ such that 
	the restriction of the quotient homomorphism $q : P \to P/{\mf R}_u(P)$ 
	to $L \subseteq P$ is an isomorphism of algebraic groups over $\bb C$. 
	Then by Proposition \ref{prop:existence_semistable-G-bundles-on-curve} the principal 
	$L$--bundle $E_L$ admits a $\mc V$--valued Lie algebroid connection. 
	Since $\mu_{\min}(E_P(\mf n)) \geq 0$ 
	by \cite{HN-reduction-Anchouche-Azad-Biswas-2002} 
	and $V\otimes K_X$ is semistable with $\mu(V\otimes K_X) < 0$, 
	it follows that $\Hom(E_P(\mf n), V\otimes K_X) = 0$, 
	and hence $H^1(X, E_P(\mf n)\otimes V^*) = 0$ by Serre duality. 
	Then by Proposition \ref{prop:connection-on-E_P-from-E_L} 
	that $E_P$ admits a $\mc V$--valued Lie algebroid connection, and then 
	by Proposition \ref{prop:Lie-algebroid-connection-under-extn-of-str-grp} 
	$E_G$ admits a $\mc V$--valued Lie algebroid connection. 
	This completes the proof. 
\end{proof}

\section*{Acknowledgment}
The first named author is supported by the \textit{National Board of Higher Mathematics (NBHM)} 
through the Doctoral Research Fellowship Program. 
The second named author is partially supported by the DST INSPIRE Faculty Fellowship 
(Research Grant No.: DST/INSPIRE/04/2020/000649, IFA20-MA-144), the Ministry of Science \& Technology, 
Government of India. 



\begin{thebibliography}{BDPS17}
	\expandafter\ifx\csname url\endcsname\relax
	\def\url#1{\texttt{#1}}\fi
	\expandafter\ifx\csname doi\endcsname\relax
	\def\doi#1{\burlalt{doi:#1}{http://doi.org/#1}}\fi
	\expandafter\ifx\csname urlprefix\endcsname\relax\def\urlprefix{URL }\fi
	\expandafter\ifx\csname href\endcsname\relax
	\def\href#1#2{#2}\fi
	\expandafter\ifx\csname burlalt\endcsname\relax
	\def\burlalt#1#2{\href{#2}{#1}}\fi
	
	\bibitem[AAB02]{HN-reduction-Anchouche-Azad-Biswas-2002}
	Boudjemaa Anchouche, Hassan Azad, and Indranil Biswas.
	\newblock Harder-{N}arasimhan reduction for principal bundles over a compact
	{K}\"ahler manifold.
	\newblock {\em Math. Ann.}, 323(4):693--712, 2002.
	\newblock \doi{10.1007/s002080200322}.
	
	\bibitem[AB01]{Anchouche-Biswas-2001}
	Boudjemaa Anchouche and Indranil Biswas.
	\newblock Einstein-{H}ermitian connections on polystable principal bundles over
	a compact {K}\"ahler manifold.
	\newblock {\em Amer. J. Math.}, 123(2):207--228, 2001.
	\newblock
	\urlprefix\url{http://muse.jhu.edu/journals/american_journal_of_mathematics/v123/123.2anchouche.pdf}.
	
	\bibitem[AB02]{Azad-Biswas-2002}
	Hassan Azad and Indranil Biswas.
	\newblock On holomorphic principal bundles over a compact {R}iemann surface
	admitting a flat connection.
	\newblock {\em Math. Ann.}, 322(2):333--346, 2002.
	\newblock \doi{10.1007/s002080100273}.
	
	\bibitem[AO24]{Alfaya-Oliveire-2024}
	David Alfaya and Andr\'{e} Oliveira.
	\newblock Lie algebroid connections, twisted {H}iggs bundles and motives of
	moduli spaces.
	\newblock {\em J. Geom. Phys.}, 201:Paper No. 105195, 55, 2024.
	\newblock \doi{10.1016/j.geomphys.2024.105195}.
	
	\bibitem[Ati57]{Atiyah-1957}
	M.~F. Atiyah.
	\newblock Complex analytic connections in fibre bundles.
	\newblock {\em Trans. Amer. Math. Soc.}, 85:181--207, 1957.
	\newblock \doi{10.2307/1992969}.
	
	
	\bibitem[BDP18]{Biswas-Dan-Paul-2018}
	Indranil Biswas, Ananyo Dan, and Arjun Paul.
	\newblock Criterion for logarithmic connections with prescribed residues.
	\newblock {\em Manuscripta Math.}, 155(1-2):77--88, 2018.
	\newblock \doi{10.1007/s00229-017-0935-6}.
	
	\bibitem[BDPS17]{Biswas-Dan-Paul-Saha-2017}
	Indranil Biswas, Ananyo Dan, Arjun Paul, and Arideep Saha.
	\newblock Logarithmic connections on principal bundles over a {R}iemann
	surface.
	\newblock {\em Internat. J. Math.}, 28(12):1750088, 18, 2017.
	\newblock \doi{10.1142/S0129167X17500884}.
	
	\bibitem[BKS24]{Biswas-Kumar-Singh-2024}
	Indranil Biswas, Pradip Kumar, and Anoop Singh.
	\newblock A criterion for {L}ie algebroid connections on a compact {R}iemann
	surface.
	\newblock {\em Geom. Dedicata}, 218(4):Paper No. 87, 2024.
	\newblock \doi{10.1007/s10711-024-00938-8}.
	
	\bibitem[BP17]{Biswas-Paul-2017}
	Indranil Biswas and Arjun Paul.
	\newblock Equivariant bundles and connections.
	\newblock {\em Ann. Global Anal. Geom.}, 51(4):347--358, 2017.
	\newblock \doi{10.1007/s10455-016-9538-9}.
	
	\bibitem[GP20]{Gurjar-Paul-2020}
	Sudarshan Gurjar and Arjun Paul.
	\newblock Criterion for existence of a logarithmic connection on a principal
	bundle over a smooth complex projective variety.
	\newblock {\em Ann. Global Anal. Geom.}, 58(3):241--251, 2020.
	\newblock \doi{10.1007/s10455-020-09723-8}.
	
	\bibitem[HL10]{Huybrechts-Lehn-2010}
	Daniel Huybrechts and Manfred Lehn.
	\newblock {\em The geometry of moduli spaces of sheaves}.
	\newblock Cambridge Mathematical Library. Cambridge University Press,
	Cambridge, second edition, 2010.
	\newblock \doi{10.1017/CBO9780511711985}.
	
	\bibitem[Ram75]{Ramanathan-1975}
	A.~Ramanathan.
	\newblock Stable principal bundles on a compact {R}iemann surface.
	\newblock {\em Math. Ann.}, 213:129--152, 1975.
	\newblock \doi{10.1007/BF01343949}.
	
	\bibitem[Ram96]{Ramanathan-I}
	A.~Ramanathan.
	\newblock Moduli for principal bundles over algebraic curves. {I}.
	\newblock {\em Proc. Indian Acad. Sci. Math. Sci.}, 106(3):301--328, 1996.
	\newblock \doi{10.1007/BF02867438}.
	
	\bibitem[Wei38]{Weil-1938}
	Andre~A. Weil.
	\newblock Généralisation des fonctions abéliennes.
	\newblock {\em J. Math. Pures Appl.}, 17:47--87, 1938.
	
\end{thebibliography}
\end{document}